\newtheorem{definition}{Definition}[section]
\newtheorem{theorem}{Theorem}[section]
\newtheorem{corollary}{Corollary}[section]
\begin{document}

\begin{center}
\textbf{Note on $f_\lambda$-statistical convergence}

\vskip 0.5 cm

Stuti Borgohain $^*${\footnote{%
The work of the first author was carried during her one month visit to
Istanbul Ticaret University, Turkey, in November'2015}} and Ekrem Sava$\c{s}$

Istanbul Commerce University, 34840 Istanbul, Turkey

E-mail : stutiborgohain$@$yahoo.com, ekremsavas@yahoo.com.
\end{center}

\vskip 1 cm

\noindent{\footnotesize \textbf{Abstract:} In this article, we study about
the $\lambda$-statistical convergence with respect to the density of moduli
and find some results related to statistical convergence as well. Also we
introduce the concept of $f_\lambda$-summable sequence and try to
investigate some relation between the $f_\lambda$-summability and module $%
\lambda$-statistical convergence.} \newline

\section{Introduction}

The idea of statistical convergence of real sequences is the extension of
convergence of real sequences. Initially the statistical convergence for
single sequences was introduced by Fast Fast \cite{Fast} in
1951  and  Schoenberg \cite{Sc}  in 1959, studied
statistical convergence as a summability method and listed some of
elementary properties of statistical convergence as well. Both of these
authors noted that if bounded sequence is statistically convergent, then it
is Cesaro summable. More investigations have been studied in the direction
of topological spaces, statistical Cauchy condition in uniform spaces,
Fourier analysis, ergodic theory, number theory, measure theory,
trigonometric series, turnpike theory and Banach spaces. Statistical
convergence turned out to be one of the most active areas of research in
summability theory after the works of Fridy \cite{Fridy} and Salat \cite
{Sat}. Recently, Savas \cite{savas1} introduced the generalized double
statistical convergence in locally solid Riesz spaces. Moreover, Mursaleen
\cite{Mursaleen} introduced the concept of $\lambda$-statistically
convergence by using the idea of $(V, \lambda)$-summability to generalize
the concept of statistical convergence. For more interesting investigations
concerning statistical convergence one may consult the papers of Cakalli \cite
{Ca}, Miller and Orhan \cite{Miler} and (\cite{Connor},  \cite{Kolk}, \cite{Kolk1},   \cite{Pa}, \cite{Sat},
\cite{savas11}, \cite{Tripathy}, \cite{Borgohain}) and others.\newline

The concept of density for sets of natural numbers with respect to the
modulus function was introduced by A. Aizpuru et al [1] in 2014. They
studied and characterized the generalization of this notion of $f$-density
with statistical convergence and proved that ordinary convergence is
equivalent to the module statistical convergence for every unbounded modulus
function. Also, in [2], A. Aizpuru and his team, they worked on double
sequence spaces for the results of $f$-statistical convergence by using
unbounded modulus function. The concept was further generalized and
characterized by Savas and Borgohain [\cite{Borgohain}, \cite{savas2}] .
\newline

The notion of statistical convergence depends on the idea of asymtotic
density of subsets of the set $\mathbb{N}$ of natural numbers. A subset $A$
of $\mathbb{N}$ is said to have natural density ${\delta(A)}$ if

\begin{equation*}
{\delta(A)}={\lim_{n \rightarrow \infty}}{\frac{1}{n}}{\sum\limits_{k=1}^n}{%
\chi_A}(k).
\end{equation*}

\noindent where $\chi_A$ is the characteristic function of $A$.\newline

We mean a sequence $(x_t)$ to be statistically convergent to $L$, if for any
$\xi > 0$, $\delta\{(t \in \mathbb{N}:\vert x_t - L \vert \geq \xi )\} = 0.$
Analogously, $(x_t)$ is said to be statistically Cauchy if for each $\xi>0$
and $n_0 \in \mathbb{N}$ there exists an integer $q \geq n_0$ such that $%
\delta(\{t \in \mathbb{N}:\Vert x_t - x_q \Vert < \xi \})=1.$\newline

Nakano \cite{Nakano} introduced the notion of a modulus function whereas
Ruckle \cite{Ruckle} and Maddox \cite{Maddox} have introduced and discussed
some properties of sequence spaces defined by using a modulus function. A
modulus function is a function $f: R^+ \rightarrow R^+ $ which satisfies:

\begin{enumerate}
\item $f(x)=0$ if and only if $x=0$.

\item $f(x+y) \leq f(x)+f(y)$ for every $x,y \in R^+$.

\item $f$ is increasing.

\item $f$ is continuous from the right at 0.
\end{enumerate}
Later on modulus function have been discussed in (\cite{Ba, savas8, savas9,savas10, savas12}) and others.

In this paper, we study the density on moduli with respect to the $\lambda$%
-statistical convergence. We also investigate some results on the new
concept of $f_\lambda$-statistical convergence with the ordinary
convergence. Also we find out some new concepts on $f_\lambda$-summability
theory and try to find out new results related to the $f_\lambda$-summable
sequences and $f_\lambda$-statistical convergent sequence.

\section{Definitions and basic results}

By density of moduli of a set $A \subseteq \mathbb{N}$, we mean $\delta_f(A)=%
\displaystyle\lim_u \frac{f(\vert A(u) \vert)}{f(u)}$, (in case this limit
exists )where $f$ is an unbounded modulus function. \newline

Let $(x_t)$ be a sequence in $X$ ($X$ is a normed space). If for each $\xi >
0$, $A=\{t \in \mathbb{N}: \Vert x_t-L\Vert > \xi \}$ has $f$-density zero,
then it is said that the $f$-statistical limit of $(x_t)$ is $L \in X$, and
we write it as $f$-stlim$x_t=L$. \newline

Observe that $\delta(A)=1-\delta(\mathbb{N} \backslash A)$.\newline

Let us assume that $A \subseteq \mathbb{N}$ and $A$ has $f$-density zero.
For every $u \in N$ we have $f(u) \leq f(\vert A(u)\vert)+f(\vert (\mathbb{N}
\backslash A)(u)\vert )$ and so

\begin{equation*}
1 \leq \frac{f(\vert A(u) \vert)}{f(u)} +\frac{f(\vert ( \mathbb{N}
\backslash A)(u)\vert )}{f(u)} \leq \frac{f(\vert A(u) \vert)}{f(u)}+1
\end{equation*}

By taking limits we deduce that $f$-density of $(\mathbb{N} \backslash A)$
is one.\newline

Let $\lambda=(\lambda_n)$ be a non-decreasing sequence of positive numbers
such that ,

\begin{equation*}
\lambda_1=1, \lambda_{n+1} \leq \lambda_n + 1 \mbox{~and~} \lambda_n
\rightarrow \infty \mbox{~as~} n \rightarrow \infty.
\end{equation*}

Note: The collection of all such sequences $\lambda$ will be denoted by $%
\Lambda$. \newline

A sequence $(x_t)$ of real numbers is said to be $\lambda$-statistically
convergent to $L$ if for any $\xi>0$,

\begin{equation*}
\displaystyle\lim_{n \rightarrow \infty} \frac{1}{\lambda_n} \vert \{ t \in
I_n :\vert x_t - L \vert \geq \xi \} \vert =0,
\end{equation*}

where $I_n=[n - \lambda_n+1, n ]$ and $\vert A \vert$ denotes the
cardinality of $A \subset N$ (refer \cite{Mursaleen}). \newline

More investigations in this direction and more applications of ideals can be found in \cite{savas1, savas4, savas5, savas13} where
 many important references can be found.

We define $f_\lambda$-statistical convergence as:

\begin{equation*}
\displaystyle\lim_{n \rightarrow \infty} \frac{f(\vert A(n) \vert )}{%
f(\lambda_n)} =0 \mbox{~ i.e.~} \delta_{f_\lambda}(A)=\displaystyle\lim_{n
\rightarrow \infty} \frac{f(\vert A(n) \vert )}{f(\lambda_n)}.
\end{equation*}

The set of all $f_{\lambda}$-statistically convergent sequences is denoted
by $S_{f_\lambda}$. \newline

A sequence $x=(x_t)$ is said to be strongly $f_\lambda$-summable to the
limit $L$ if

\begin{equation*}
\displaystyle\lim_n \frac{1}{f(\lambda_n)}\displaystyle\sum_{t \in I_n}
f(\vert x_t -L \vert)=0
\end{equation*}
and we write it as $x_t \rightarrow L[f_\lambda]$. In this case, $L$ is
called the $f_\lambda$-limit of $x$. We denote the class of all $f_\lambda$%
-summable sequences as $w_{f_\lambda}$.

\section{Main Results}

Maddox \cite{Maddox1} showed the existence of an unbounded modulus $f$ for
which there exists a positive constant $c$ such that $f(xy) \geq c f(x)f(y)$
for all $x \geq 0, y \geq 0$.\newline

\begin{theorem}
Let $f$ be an unbounded modulus such that there is a positive constant $c$
such that $f(xy) \geq cf(x)f(y)$ for all $x \geq 0, y \geq 0$ and $%
\displaystyle\lim_{u \rightarrow \infty} \frac{f(u)}{u} > 0$. A strongly $%
\lambda$-summable sequence $(x_t)$ is also a $f_\lambda$-statistically
convergent sequence. Moreover, for a bounded sequence $(x_t)$, $f_\lambda$%
-statistically convergence implies strongly $\lambda$-summability.
\end{theorem}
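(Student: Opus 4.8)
The plan is to prove the two implications separately, using the growth condition $\lim_{u\to\infty} f(u)/u = \beta > 0$ for the first and the submultiplicative-type inequality $f(xy) \geq cf(x)f(y)$ for the second. Throughout, fix $\xi > 0$ and write $K_n = \{t \in I_n : |x_t - L| \geq \xi\}$, so that the relevant $f_\lambda$-density is $\frac{f(|K_n|)}{f(\lambda_n)}$, and abbreviate $r_n = |K_n|/\lambda_n$.

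For the first assertion, suppose $(x_t)$ is strongly $\lambda$-summable to $L$, i.e. $\frac{1}{\lambda_n}\sum_{t\in I_n}|x_t-L|\to 0$. First I would record the elementary Chebyshev-type bound $\xi|K_n| \leq \sum_{t\in K_n}|x_t-L| \leq \sum_{t\in I_n}|x_t-L|$, which gives $r_n \leq \frac{1}{\xi}\cdot\frac{1}{\lambda_n}\sum_{t\in I_n}|x_t-L| \to 0$. It then remains to transfer this ordinary density statement to the modulus. From $\lim_{u\to\infty} f(u)/u = \beta>0$ together with the monotonicity of $f$ one obtains constants $C>0$ and $u_0$ with $f(u)\le Cu+f(u_0)$ for all $u\ge 0$, while $\lambda_n\to\infty$ gives $f(\lambda_n)\ge (\beta/2)\lambda_n$ for large $n$. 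Combining these, $\frac{f(|K_n|)}{f(\lambda_n)} \le \frac{C|K_n|+f(u_0)}{(\beta/2)\lambda_n} = \frac{2C}{\beta}r_n + \frac{2f(u_0)}{\beta\lambda_n} \to 0$, which is exactly $f_\lambda$-statistical convergence; note this bound needs no case split, since it uses only $r_n\to 0$ and $\lambda_n\to\infty$.

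For the second assertion, assume $(x_t)$ is bounded, say $|x_t - L|\le M$, and $f_\lambda$-statistically convergent to $L$. The key step is to recover ordinary density zero from $f$-density zero, and here the inequality $f(xy)\ge cf(x)f(y)$ is precisely what is needed: writing $|K_n| = r_n\lambda_n$, one has $f(|K_n|) = f(r_n\lambda_n) \ge c\, f(r_n)f(\lambda_n)$, hence $c\,f(r_n) \le \frac{f(|K_n|)}{f(\lambda_n)} \to 0$. Since $f$ is a modulus ($f(x)=0 \iff x=0$, increasing, right-continuous at $0$), $f(r_n)\to 0$ forces $r_n\to 0$; otherwise some subsequence $r_{n_k}\ge\delta>0$ would give $f(r_{n_k})\ge f(\delta)>0$. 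Finally I would split $\sum_{t\in I_n}|x_t-L| = \sum_{t\in K_n}|x_t-L| + \sum_{t\in I_n\setminus K_n}|x_t-L|$, bounding the first sum by $M|K_n|$ and the second by $\xi\lambda_n$, to obtain $\frac{1}{\lambda_n}\sum_{t\in I_n}|x_t-L| \le Mr_n + \xi$. Letting $n\to\infty$ and then $\xi\to 0$ yields strong $\lambda$-summability.

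The main obstacle is the passage between $f$-density and ordinary density, which is the only place the structural hypotheses on $f$ enter: the submultiplicative inequality is essential in the second part to push $\frac{f(|K_n|)}{f(\lambda_n)}\to 0$ down to $r_n\to 0$, whereas the linear-growth condition effects the reverse transfer in the first part. Boundedness is indispensable in the second part, since it is exactly what bounds the contribution of the large terms on $K_n$ by $M|K_n|$; without it those terms cannot be controlled by cardinality alone, and the implication fails.
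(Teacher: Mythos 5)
Your proof is correct, and it is genuinely different from (and more complete than) the paper's argument. For the first implication the paper works directly with the quantity $f\bigl(\vert K_n\vert\bigr)$: it chains $\sum f(\vert x_t-L\vert)\geq \vert K_n\vert f(\xi)\geq f(\vert K_n\vert\xi)\geq c\,f(\vert K_n\vert)f(\xi)$, divides by $\lambda_n$, and inserts the factor $f(\lambda_n)/\lambda_n$ to invoke $\lim_u f(u)/u>0$; so the submultiplicative hypothesis $f(xy)\geq cf(x)f(y)$ is used already in this direction. You instead first extract the ordinary density statement $r_n=\vert K_n\vert/\lambda_n\to 0$ by Chebyshev and then transfer it to the modulus using only the two linear growth bounds $f(u)\leq Cu+f(u_0)$ (a consequence of subadditivity alone) and $f(\lambda_n)\geq(\beta/2)\lambda_n$; this cleanly separates the summability step from the modulus step, avoids the paper's dubious opening inequality $\sum f(\cdot)\geq f\bigl(\sum f(\cdot)\bigr)$, and shows that the constant $c$ is not actually needed for this half. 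For the second implication the paper offers no argument at all --- its proof stops after the first direction --- whereas you supply one, and it is exactly here that $f(xy)\geq cf(x)f(y)$ does real work, via $f(r_n\lambda_n)\geq c\,f(r_n)f(\lambda_n)$ to force $r_n\to 0$, after which boundedness controls the sum over $K_n$. The only cosmetic caveat is that you read ``strongly $\lambda$-summable'' in the standard Mursaleen sense $\frac{1}{\lambda_n}\sum_{t\in I_n}\vert x_t-L\vert\to 0$, while the paper's displayed computation sums $f(\vert x_t-L\vert)$ over $t\leq n$; your reading is the consistent one and your argument adapts to either convention.
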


\begin{proof} By the definition of modulus function, we have for any sequence $x=(x_t)$ and $\xi >0$,
\begin{eqnarray*}
\displaystyle\sum_{t=1}^n f(\vert x_t - L \vert) & \geq & f\left(\displaystyle\sum_{t=1}^n f(\vert x_t - L \vert\right)\\
& \geq & f(\vert \{t \leq n: \vert x_t - L \vert \geq \xi \} \vert \xi )\\
& \geq & c f(\vert \{t \leq n: \vert x_t - L \vert \geq \xi \} \vert ) f(\xi )\\
\end{eqnarray*}

Since $x=(x_t)$ is strongly $\lambda$-summable sequence, so

\begin{eqnarray*}
\frac{1}{\lambda_n} \displaystyle\sum_{t=1}^n f(\vert x_t - L \vert) & \geq & \frac{c f(\vert \{t \leq n: \vert x_t - L \vert \geq \xi \} \vert ) f(\xi )}{\lambda_n}\\
& = & \frac{c f(\vert \{t \leq n: \vert x_t - L \vert \geq \xi \} \vert ) f(\xi ) f(\lambda_n)}{\lambda_n f(\lambda_n)}
\end{eqnarray*}

By using the fact that $\displaystyle\lim_{u \rightarrow \infty} \frac{f(u)}{u} > 0$ and $x$ is $\lambda$-statistical w.r.t. $f$, it can be summarized that $x$ is $f_\lambda$-statistcal convergent and this completes the proof of the theorem.
\end{proof}

\begin{theorem}
For a strongly $\lambda$-summable or statistically $f_\lambda$-convergent
sequence $x=(x_t)$ to $L$, there is a convergent sequence $y$ and a $%
f_\lambda$-statistically null sequence $z$ such that $y$ is convergent to $L$%
, $x=y+z$ and $\displaystyle\lim_n \frac{f(\vert \{ t \in I_n:z_t \neq 0 \}
\vert)}{f(\lambda_n)}=0$. Moreover, if $x$ is bounded, then $y$ and $z$ both
are bounded.
\end{theorem}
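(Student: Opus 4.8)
The plan is to reduce both hypotheses to a single working assumption, namely that for every $\xi>0$ the set $A_\xi=\{t:\vert x_t-L\vert\geq\xi\}$ satisfies $\delta_{f_\lambda}(A_\xi)=0$, and then to build the splitting $x=y+z$ from a single ``bad'' set $B$ of $f_\lambda$-density zero. In the $f_\lambda$-statistically convergent case this working assumption is just the definition. In the strongly $\lambda$-summable case I would derive it directly: since $f$ is subadditive one has $f(k)\leq kf(1)$ for every integer $k$, and since $f$ is increasing, $\vert A_\xi\cap I_n\vert\,f(\xi)\leq\sum_{t\in I_n}f(\vert x_t-L\vert)$; combining these gives
\begin{equation*}
\frac{f(\vert A_\xi\cap I_n\vert)}{f(\lambda_n)}\leq\frac{f(1)}{f(\xi)}\cdot\frac{1}{f(\lambda_n)}\sum_{t\in I_n}f(\vert x_t-L\vert),
\end{equation*}
so strong $\lambda$-summability forces $\delta_{f_\lambda}(A_\xi)=0$ (one may alternatively invoke the preceding theorem). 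Either way the two cases collapse into one.

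The core of the argument is the construction of $B$. Put $A_j=\{t:\vert x_t-L\vert\geq 1/j\}$, so that $A_1\subseteq A_2\subseteq\cdots$ and $\delta_{f_\lambda}(A_j)=0$ for every $j$. Using the latter I would choose an increasing sequence $v_1<v_2<\cdots$ with $v_j\to\infty$ such that $f(\vert A_j\cap I_n\vert)/f(\lambda_n)<1/j$ whenever $n>v_j$, and then let $B$ coincide with $A_j$ on each interval $(v_j,v_{j+1}]$, that is $B=\bigcup_{j\geq 1}\bigl(A_j\cap(v_j,v_{j+1}]\bigr)$. Setting $z_t=x_t-L$ for $t\in B$ and $z_t=0$ otherwise, and $y=x-z$, one has $y_t=x_t$ on $\mathbb{N}\setminus B$ and $y_t=L$ on $B$. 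The ordinary convergence $y\to L$ is then routine: for $t\in\mathbb{N}\setminus B$ with $t>v_j$ the index $t$ lies in some $(v_i,v_{i+1}]$ with $i\geq j$ and $t\notin A_i$, whence $\vert x_t-L\vert<1/i\leq 1/j$, while $y_t-L=0$ on $B$.

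The step I expect to be the main obstacle is verifying $\delta_{f_\lambda}(B)=0$, because the blocks $I_n=[n-\lambda_n+1,n]$ slide and overlap, so a single block may straddle many of the intervals $(v_j,v_{j+1}]$. The device is to fix the threshold $v_1$ and split $B\cap I_n$ into its part in $[1,v_1]$, which contributes at most $v_1$ indices, and its part in $(v_1,n]$. If $n\in(v_j,v_{j+1}]$, then every $t\in B\cap(v_1,n]$ lies in some $A_i$ with $i\leq j$, hence in $A_j$ by monotonicity, so $B\cap I_n\cap(v_1,n]\subseteq A_j\cap I_n$. Subadditivity then gives $f(\vert B\cap I_n\vert)\leq f(\vert A_j\cap I_n\vert)+f(v_1)$, so that
\begin{equation*}
\frac{f(\vert B\cap I_n\vert)}{f(\lambda_n)}<\frac{1}{j}+\frac{f(v_1)}{f(\lambda_n)}.
\end{equation*}
As $n\to\infty$ we have $j\to\infty$ and, since $f$ is unbounded and $\lambda_n\to\infty$, also $f(\lambda_n)\to\infty$ with $f(v_1)$ fixed; both terms vanish, so $\delta_{f_\lambda}(B)=0$. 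Finally, since $\{t\in I_n:z_t\neq 0\}\subseteq B\cap I_n$ and $f$ is increasing, this yields both the required density condition for $z$ and that $z$ is $f_\lambda$-statistically null, while the boundedness assertion is immediate: if $\vert x_t\vert\leq M$ then $y_t\in\{x_t,L\}$ is bounded and $z=x-y$ is a difference of bounded sequences.
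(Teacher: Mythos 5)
Your proof is correct and follows essentially the same route as the paper's: both reduce to the $f_\lambda$-statistically convergent case, choose a rapidly increasing sequence of thresholds ($v_j$ for you, $N_d$ in the paper), let $z$ agree with $x-L$ exactly on the level set $A_j$ over the block $(v_j,v_{j+1}]$ with $y=x-z$, and verify the density condition by the same monotonicity argument $A_i\subseteq A_j$ for $i\leq j$. Your write-up is in fact the tidier of the two; the only caveat is that your displayed inequality for the summable case really controls strong $f_\lambda$-summability rather than strong $\lambda$-summability, so for the hypothesis as literally stated you do need your fallback of invoking the preceding theorem (with its extra assumptions on $f$), which is exactly what the paper does.
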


\noindent Note: Here $f$ be an unbounded modulus such that there is a
positive constant $c$ such that $f(xy) \geq cf(x)f(y)$ for all $x \geq 0, y
\geq 0$ and $\displaystyle\lim_{u \rightarrow \infty} \frac{f(u)}{u} > 0$.

\begin{proof} From the previous theorem, we have $x$ is strongly $\lambda$-summable to $L$ implies $x$ is $f_\lambda$-statistically convergent to $L$. Choose a strictly increasing sequence of positive integers $M_1 < M_2  <M_3...$ such that,

$$\frac{1}{f(\lambda_n)} f\left(\left\vert \left\{ t \in I_n : \vert x_t -L\vert \geq \frac{1}{d} \right\} \right\vert \right) < \frac{1}{d}, \mbox{~for~} n > N_d, \mbox{~where~} N_0=0$$

If $N_0< t< N_1$, let us set $z_t=0$ and $y_t=x_t$. \\

Let $d \geq 1$ and $N_d < t \leq N_{d+1}$, we set,

\[ y_t = \left\{ \begin{array}{ll}
x_t, ~z_t=0 & \mbox{if $\vert x_t -L \vert <\frac{1}{d}$};\\
L, z_t=x_t -L & \mbox{if $\vert x_t -L \vert \geq \frac{1}{d}$}.\end{array} \right. \]

Clearly, $x=y+z$ and $y$ and $z$ are bounded, if $x$ is bounded.\\

Observe that  $\vert y_t -L \vert < \xi$ for $t > N_d$ and $\vert y_t-L\vert =\vert x_t - L\vert <\xi$ if $\vert x_t -L \vert < \frac{1}{d}$.

Which follows that $\vert y_t -L \vert = \vert L-L\vert =0$ if $\vert x_t-L \vert > \frac{1}{d}$.\\

Hence, for $\xi$ arbitrary, we get $\displaystyle\lim_t y_t=L$.\\

Next, we observe that, $f(\vert \{ t \in I_n: z_t \neq 0 \} \vert ) \geq f ( \vert \{ t \in I_n : \vert z_t \vert \geq \xi \} \vert )$, for any natural number $n$ and $\xi>0$. Hence,

$$\displaystyle\lim_n \frac{f(\vert \{ t \in I_n: z_t \neq 0 \} \vert )}{f(\lambda_n)}=0;$$

that is $z$ is $f_\lambda$-statistically null.\\

We now show that if $\beta>0$ and $d \in N$ such that $\frac{1}{d}< \beta$, then $f(\vert \{ t \in I_n: z_t \neq 0 \}\vert ) < \beta$ for  all $n >N_d$. \\

If $N_d <t \leq N_{d+1}$, then $z_t \neq 0$ only if $\vert x_t -L \vert > \frac{1}{d}$. It follows that if $N_p < t \leq N_{p+1}$, then,

$$\{ t \in I_n: z_t \neq 0 \} \subseteq \left\{ t \in I_n : \vert x_t -L \vert > \frac{1}{p} \right\}.$$

Consequently,

$$\frac{1}{f(\lambda_n)}f(\vert \{ t \in I_n : z_t \neq 0 \} \vert ) \leq \frac{1}{f(\lambda_n)} f\left(\left\vert \left\{ t \in I_n: \vert x_t -L\vert > \frac{1}{p} \right\} \right\vert \right) < \frac{1}{p} < \frac{1}{d}< \beta,$$

if $N_p < n \leq N_{p+1}$ and $p > d$.\\

That is, $\displaystyle\lim_n \frac{1}{f(\lambda_n)} f(\vert \{ t \in I_n: z_t \neq 0 \} \vert )=0$.\\

This completes the proof of the theorem.

\end{proof}

\begin{corollary}
Let $f, g$ be unbounded moduli, $X$ a normed space, $(x_t)$ a sequence in $X$
and $x, y \in X$. We have

\begin{enumerate}
\item A $f_\lambda$-statistical convergent sequence is also statistically
convergent to the same limit.

\item The $f_\lambda$-statistical limit is unique whenever it exists.

\item Let $f_\lambda$-stat$\displaystyle\lim x_t = x$ and $f_\lambda$-stat$%
\displaystyle\lim y_t = y$, then,

\begin{itemize}
\item $f_\lambda$-stat$\displaystyle\lim (x_t \pm y_t) = x \pm y$

\item $f_\lambda$-stat$\displaystyle\lim \alpha x_t = \alpha x, \alpha \in R.
$
\end{itemize}

\item If $f$-st$\displaystyle\lim x_t =x$ and $g$-st$\displaystyle\lim x_t =y
$ then $x=y$, i.e two different methods of statistical convergence are
always compatible.
\end{enumerate}
\end{corollary}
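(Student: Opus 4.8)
The plan is to reduce all four assertions to three elementary features of an unbounded modulus $f$: monotonicity, subadditivity $f(a+b)\le f(a)+f(b)$, and the derived inequality $f(mx)\le m\,f(x)$ for every positive integer $m$ (obtained by iterating subadditivity). Throughout, for a set $A\subseteq\mathbb{N}$ I write $A(n)=|\{t\in I_n:t\in A\}|$, and for $\xi>0$ I abbreviate $A_\xi(L)=\{t:\Vert x_t-L\Vert\ge\xi\}$. First I would establish the engine behind (1): if $\delta_{f_\lambda}(A)=0$ then $A(n)/\lambda_n\to 0$. Suppose not; then there are $\epsilon>0$ and infinitely many $n$ with $A(n)\ge\epsilon\lambda_n$, so $\lambda_n\le\lceil 1/\epsilon\rceil\,A(n)$ and, by monotonicity together with $f(mx)\le m\,f(x)$, $f(\lambda_n)\le\lceil 1/\epsilon\rceil\,f(A(n))$. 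Hence $f(A(n))/f(\lambda_n)\ge 1/\lceil 1/\epsilon\rceil>0$ along that subsequence, contradicting $\delta_{f_\lambda}(A)=0$. Applying this with $A=A_\xi(L)$ shows that $f_\lambda$-statistical convergence to $L$ forces the ordinary density of $A_\xi(L)$ to vanish for every $\xi>0$, which is exactly statistical convergence to $L$; this is the $\lambda$-analogue of the Aizpuru et al. equivalence recorded in the introduction, and it gives (1).

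For (2) I would combine the standard disjointness trick with the complement computation already carried out in Section 2. If $x$ had two $f_\lambda$-limits $L_1\ne L_2$, pick $\xi=\Vert L_1-L_2\Vert/2$; then $\{t:\Vert x_t-L_1\Vert<\xi\}\subseteq A_\xi(L_2)$, since a single $t$ cannot lie within $\xi$ of both limits. The left-hand set is the complement of $A_\xi(L_1)$ and therefore has $f_\lambda$-density one (by the complement argument of Section 2, which is verbatim the same in the $\lambda$ setting), while the right-hand set has $f_\lambda$-density zero, a contradiction; hence $L_1=L_2$. Alternatively, (2) is immediate from (1) together with uniqueness of the ordinary statistical limit.

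Part (3) rests only on subadditivity. For sums, $A_\xi(x+y)\subseteq A_{\xi/2}(x)\cup A_{\xi/2}(y)$, and for a union $f\big((A\cup B)(n)\big)\le f\big(A(n)+B(n)\big)\le f(A(n))+f(B(n))$, so dividing by $f(\lambda_n)$ shows the class of $f_\lambda$-null sets is closed under finite unions; the same inclusion handles differences. For scalars, $A_\xi(\alpha x)=A_{\xi/|\alpha|}(x)$ when $\alpha\ne0$ and is eventually empty when $\alpha=0$, giving $f_\lambda$-stat$\lim\alpha x_t=\alpha x$. Finally, for (4) I would apply the engine in its non-$\lambda$ form (the case $\lambda_n=n$, $I_n=[1,n]$, which is precisely the Aizpuru et al. theorem cited in the introduction): $f$-statistical convergence to $x$ and $g$-statistical convergence to $y$ each imply ordinary statistical convergence, to $x$ and to $y$ respectively, whence $x=y$ by uniqueness of the statistical limit, so the two methods are compatible.

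The step I expect to be the genuine obstacle is the engine for (1): everything else is set inclusions combined with subadditivity, but that first reduction is where the modulus hypothesis does real work, and one must check that the constant $\lceil 1/\epsilon\rceil$ produced by $f(mx)\le m\,f(x)$ is uniform in $n$, so that the resulting lower bound on $f(A(n))/f(\lambda_n)$ genuinely survives passage to the limit.
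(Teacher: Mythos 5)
The paper states this corollary without proof, so there is nothing to compare your argument against directly; on its own terms, your reduction to monotonicity, subadditivity, and $f(mx)\le m f(x)$ is the right machinery (it is the Aizpuru et al.\ argument transplanted to the windows $I_n$), and parts (2), (3) and (4) go through exactly as you describe: the complement computation of Section 2 does carry over verbatim with $|A\cap I_n|+|(\mathbb{N}\setminus A)\cap I_n|=|I_n|$ in place of $|A(u)|+|(\mathbb{N}\setminus A)(u)|=u$, the union bound $f(|(A\cup B)\cap I_n|)\le f(|A\cap I_n|)+f(|B\cap I_n|)$ settles linearity, and (4) only uses the non-$\lambda$ engine, where it is valid.

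The genuine gap is in part (1), and it is exactly at the sentence where you say the conclusion of the engine ``is exactly statistical convergence to $L$.'' Your engine correctly shows that $\delta_{f_\lambda}(A)=0$ forces $|A\cap I_n|/\lambda_n\to 0$, i.e.\ $\lambda$-statistical convergence. That is not ordinary statistical convergence: the latter requires $|A\cap[1,n]|/n\to 0$, and for a general $\lambda\in\Lambda$ (e.g.\ $\lambda_n$ growing like $\log n$, so the windows $I_n$ see only a vanishing fraction of $[1,n]$) Mursaleen's result gives $S_\lambda\subseteq S$ if and only if $\liminf_n \lambda_n/n>0$. The paper itself tacitly concedes this by imposing a hypothesis of that kind in its Theorem on passage from $f_\lambda$- to $f$-statistical convergence. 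So either part (1) should be read as ``is also $\lambda$-statistically convergent,'' which is what you actually prove, or it needs the extra hypothesis $\liminf_n\lambda_n/n>0$; as written, your step from the engine to the stated conclusion does not follow. Note also that your fallback proof of (2) via (1) inherits this gap, but your direct disjointness argument for (2) is self-contained and correct, so (2) is unaffected.
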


\begin{theorem}
Let $(x_t)$ be a sequence in a normed space $X$. For an unbounded modulus $f$%
, $(x_t)$ is $f_\lambda$-statistically convergent to $x$ if and only if
there exists $T \subseteq \mathbb{N}$ such that $f_\lambda$-density of $T$
is zero and $(x_t)$ has $\lambda$-summable to $x$.
\end{theorem}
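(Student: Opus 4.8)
The plan is to establish the two implications separately, reading this as the $f_\lambda$-analogue of the classical Salat--Fridy decomposition characterization, in which $T$ is the exceptional set off which the sequence converges (hence is $\lambda$-summable) to $x$. Throughout, for $A\subseteq\mathbb{N}$ I write $A(n)=A\cap I_n$ with $I_n=[n-\lambda_n+1,n]$, and I use only that $f$ is increasing, subadditive and unbounded, so that $f(\lambda_n)\to\infty$.

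For sufficiency, suppose $\delta_{f_\lambda}(T)=0$ and $x_t\to x$ along $\mathbb{N}\setminus T$. Fix $\xi>0$ and set $A=\{t:\Vert x_t-x\Vert\ge\xi\}$. Convergence off $T$ yields a threshold $t_0$ with $A\cap(t_0,\infty)\subseteq T$, whence $|A(n)|\le t_0+|T(n)|$ for every $n$. Since $f$ is increasing and subadditive,
\[
\frac{f(|A(n)|)}{f(\lambda_n)}\le\frac{f(t_0)}{f(\lambda_n)}+\frac{f(|T(n)|)}{f(\lambda_n)} .
\]
As $n\to\infty$ the first summand vanishes because $f(\lambda_n)\to\infty$, and the second vanishes by hypothesis; thus $\delta_{f_\lambda}(A)=0$, i.e. $(x_t)$ is $f_\lambda$-statistically convergent to $x$.

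For necessity I would mimic the block construction of Theorem 3.2, but now needing only the basic modulus properties. Put $A_j=\{t:\Vert x_t-x\Vert\ge 1/j\}$; since $(x_t)$ is $f_\lambda$-statistically convergent to $x$ we have $\delta_{f_\lambda}(A_j)=0$ for every $j$, so I can choose $N_1<N_2<\cdots$ with $f(|A_j(n)|)/f(\lambda_n)<1/j$ for all $n>N_j$. I then define $T$ as the diagonal union collecting, on each block $N_j<t\le N_{j+1}$, exactly those $t$ lying in $A_j$ (and no $t\le N_1$). Off $T$ one has $\Vert x_t-x\Vert<1/j$ throughout the block $(N_j,N_{j+1}]$, so $x_t\to x$ along $\mathbb{N}\setminus T$, which also makes the restricted sequence $\lambda$-summable to $x$.

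The crux is checking $\delta_{f_\lambda}(T)=0$, where the sliding windows $I_n$ need not respect the block boundaries $N_j$. Here the key observation is the nesting $A_1\subseteq A_2\subseteq\cdots$ coming from $1/j\downarrow 0$: if $N_p<n\le N_{p+1}$ and $t\in T(n)$, then $t$ lies in some block $(N_j,N_{j+1}]$ with $j\le p$ and $t\in A_j\subseteq A_p$, so $T(n)\subseteq A_p(n)$. Consequently $f(|T(n)|)/f(\lambda_n)\le f(|A_p(n)|)/f(\lambda_n)<1/p$, and letting $n\to\infty$ (so $p\to\infty$) gives $\delta_{f_\lambda}(T)=0$. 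I expect the only genuinely delicate point to be the bookkeeping that guarantees $p\to\infty$ as $n\to\infty$ and that the containment $T(n)\subseteq A_p(n)$ survives for every window, rather than any hard estimate; the monotonicity and subadditivity of $f$ do the rest.
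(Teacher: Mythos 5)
Your proof is correct and follows essentially the same route as the paper: the necessity direction is exactly the paper's block construction (your $A_j$, $N_j$ and diagonal union $T$ correspond to its $V_z$, $i_z$ and $T=\bigcup_z([i_z,i_{z+1})\cap V_z)$, with the same key containment $T(n)\subseteq A_p(n)$), and the sufficiency direction is the same inclusion $\{t:\Vert x_t-x\Vert\ge\xi\}\subseteq T\cup\{1,\dots,t_0\}$ combined with subadditivity and monotonicity of $f$. Your write-up is in fact cleaner about the nesting $A_1\subseteq A_2\subseteq\cdots$ and about why $p\to\infty$ forces $\delta_{f_\lambda}(T)=0$, but there is no substantive difference in method.
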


\begin{proof} Let us assume that  $V_z = \left\{ t \in \mathbb{N}: \displaystyle\sum_t  \Vert x_t-x \Vert > \frac{1}{z} \right\}$, for every $z \in \mathbb{N}$ such that $V_z \subset V_{z+1}$ and $\displaystyle\lim_n \frac{f(\vert V_z(n) \vert)}{f(\lambda_n)} =0$. \\

Also by induction , we obtain $\frac{f(\vert V_z(n) \vert)}{f(\lambda_n)} \leq \frac{1}{z}$ whenever $n \geq i_z$. \\

Note: Here $i_1 < i_2 < i_3...$ is a strictly increasing sequence of positive numbers such that $i_z \in V_z$.\\

Again set $T= \displaystyle\cup_{z \in \mathbb{N}} ([i_z, i_{z+1}) \cap V_z)$. Then for every $n \geq i_1$ there exists $z \in \mathbb{N}$ such that $i_z \leq n \leq i_{z+1}$ and if $m \in T(n)$ then $m < i_{z+1}$, which implies $m \in V_z$. Therefore $T(n) \subseteq V_z(n)$ and thus,

$$\frac{f(\vert T(n) \vert )}{f(\lambda_n)} \leq \frac{f(\vert V_z(n) \vert )}{f(\lambda_n)} \leq \frac{1}{z}$$

which follows that $T$ has $f_\lambda$-density zero.\\

For $\xi > 0$ and $z \in \mathbb{N}$ such that $\frac{1}{z} < \xi$, we have $n \in \mathbb{N} \backslash T$ and $n \geq i_z$ for which  there exists $q \geq z$ with $i_q \leq n \leq i_{q+1}$ and this implies $n \notin V_q$, so,

$$ \frac{1}{\lambda_n}\displaystyle\sum_t \Vert x_t -x \Vert  \leq \frac{1}{q} \leq \frac{1}{z} < \xi.$$

which shows that $(x_t)$ has $\lambda$-summable to $x$.\\

Conversely, let us assume that for $T \in \mathbb{N}$, $\displaystyle\lim_{t \in \mathbb{N} \backslash T}  \frac{1}{\lambda_n} \Vert x_t-x \Vert=0$ for $n \in I_n$ and $T$ has ${f_\lambda}$-density zero. For $\xi > 0$, there exists $t_0 \in \mathbb{N}$ such that if $t > t_0$ and $t \in \mathbb{N} \backslash T$ then

$$ \frac{1}{\lambda_n}\displaystyle\sum_t \Vert x_t -x \Vert \leq \xi.$$

This implies,

$$\left\{t \in \mathbb{N}: \displaystyle\sum_t \Vert x_t -x \Vert > \xi \right\} \subseteq T \cup \{1,..t\}$$

and then $\delta_{f_\lambda}\left(\left\{t \in \mathbb{N}: \displaystyle\sum_t \Vert x_t -x \Vert  > \xi \right\}\right)=0$.\\

\end{proof}

\vskip 0.3 cm

\begin{definition}
The sequence $(x_t)$ is $f_\lambda$-statistically Cauchy if for every $\xi >
0$ there exists $Q \in \mathbb{N}$ such that $\delta_{f_\lambda}\left(\left%
\{t \in \mathbb{N} : \displaystyle\sum_t \Vert x_t - x_Q \Vert > \xi
\right\}\right)= 0$.
\end{definition}

\begin{corollary}
A sequence is $f_\lambda$-statistically convergent implies that it is $%
f_\lambda$-statistically Cauchy. The converge is true if the space is
complete and this result is a particular case of filter convergence.
\end{corollary}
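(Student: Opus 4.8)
The plan is to prove the two implications separately and then indicate why the statement sits inside the general theory of filter convergence. The organizing observation I would record first is that the family $\mathcal{I}=\{A\subseteq\mathbb{N}:\delta_{f_\lambda}(A)=0\}$ is an ideal. It is closed under passing to subsets because $f$ is increasing, so $|B(n)|\le|A(n)|$ yields $f(|B(n)|)\le f(|A(n)|)$; and it is closed under finite unions because subadditivity together with monotonicity of $f$ give $f(|(A\cup B)(n)|)\le f(|A(n)|+|B(n)|)\le f(|A(n)|)+f(|B(n)|)$, so $\delta_{f_\lambda}(A\cup B)=0$ once $\delta_{f_\lambda}(A)=\delta_{f_\lambda}(B)=0$. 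Consequently the complements $\mathcal{F}=\{A:\delta_{f_\lambda}(\mathbb{N}\setminus A)=0\}$ form a free filter on $\mathbb{N}$, and $f_\lambda$-statistical convergence of $(x_t)$ to $x$ is exactly $\mathcal{F}$-convergence; this is the sense in which the corollary is a special case of filter convergence.

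For the forward implication I would assume $f_\lambda$-stat$\lim x_t=x$. Given $\xi>0$, the set $A=\{t:\Vert x_t-x\Vert>\xi/2\}$ lies in $\mathcal{I}$, and since $\mathbb{N}\notin\mathcal{I}$ (because $\lambda_n\le n$ forces $f(n)/f(\lambda_n)\ge1$), the complement $\mathbb{N}\setminus A$ is nonempty. Picking any $Q$ with $\Vert x_Q-x\Vert\le\xi/2$ and applying the triangle inequality gives $\{t:\Vert x_t-x_Q\Vert>\xi\}\subseteq A$, a set of $f_\lambda$-density zero. Hence $(x_t)$ is $f_\lambda$-statistically Cauchy.

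The substantive direction is the converse under completeness, where I would extract an honest Cauchy sequence witnessed by the filter. For each $k\in\mathbb{N}$ the Cauchy hypothesis supplies $Q_k$ with $A_k=\{t:\Vert x_t-x_{Q_k}\Vert>2^{-k}\}\in\mathcal{I}$. Since $\mathcal{I}$ is closed under finite unions, $A_k\cup A_{k+1}\in\mathcal{I}$, so $\mathbb{N}\setminus(A_k\cup A_{k+1})$ has $f_\lambda$-density one and in particular is nonempty; choosing a point $t$ in it gives $\Vert x_t-x_{Q_k}\Vert\le2^{-k}$ and $\Vert x_t-x_{Q_{k+1}}\Vert\le2^{-(k+1)}$, whence $\Vert x_{Q_k}-x_{Q_{k+1}}\Vert\le2^{-k}+2^{-(k+1)}$. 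Thus $(x_{Q_k})$ is a genuine Cauchy sequence in $X$, which by completeness converges to some $x\in X$. To finish I would verify this $x$ is the $f_\lambda$-statistical limit: given $\xi>0$, fix $k$ with $2^{-k}<\xi/2$ and $\Vert x_{Q_k}-x\Vert<\xi/2$, and note $\{t:\Vert x_t-x\Vert>\xi\}\subseteq\{t:\Vert x_t-x_{Q_k}\Vert>\xi/2\}\subseteq A_k$, which has $f_\lambda$-density zero. Equivalently, one can phrase this as saying the pushforward filter $x_\ast\mathcal{F}$ contains sets of arbitrarily small diameter, hence is a Cauchy filter, and therefore converges in the complete space $X$ to the required limit.

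The main obstacle is the converse, and within it the two points that carry the weight are the closure of $\mathcal{I}$ under finite unions, which is precisely what guarantees that the relevant intersections of density-one sets are nonempty so the diagonal sequence $(x_{Q_k})$ can be formed, and the use of completeness to convert that ordinary Cauchy sequence into an actual limit. The remaining steps are routine triangle-inequality estimates.
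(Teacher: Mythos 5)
Your argument is correct, and it is worth noting that the paper offers no proof of this corollary itself: the direct implication is not argued anywhere, and the converse is the content of the theorem that follows, which is proved there by a different device. For the converse the paper forms the nested sets $J_k=\bigcap_{i\le k}B(x_{Q_i},\tfrac{1}{i})$, invokes (implicitly) Cantor's intersection theorem to obtain the unique point $x\in\bigcap_k J_k$, and then constructs an exceptional set $D=\bigcup_k([r_k,r_{k+1})\cap R_k)$ of $f_\lambda$-density zero off which $x_t\to x$, mirroring the decomposition technique of the preceding theorems. You instead extract an honest Cauchy sequence $(x_{Q_k})$ by using closure of the density-zero ideal under finite unions to find an index $t\notin A_k\cup A_{k+1}$, pass to its limit by sequential completeness, and finish with a single containment between density-zero sets. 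Your route is more elementary and makes explicit two points the paper glosses over: that the relevant intersections (the sets $J_k$, equivalently the complements of finite unions of the $A_k$) are nonempty because $\mathbb{N}$ itself cannot have $f_\lambda$-density zero (since $f(n)/f(\lambda_n)\ge 1$), and that even the direct implication requires choosing $Q$ outside a density-zero set, which rests on the same nonemptiness. What the paper's approach buys in exchange is the stronger conclusion that convergence to $x$ holds off one fixed density-zero set $D$, which your proof does not produce but which the corollary as stated does not require. Finally, your preliminary observation that the density-zero sets form an ideal, hence the large sets a free filter with respect to which $f_\lambda$-statistical convergence is filter convergence, is precisely the justification for the paper's otherwise unexplained remark that the result is a particular case of filter convergence.
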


\begin{theorem}
For the Banach space $X$ where $f$ be an unbounded modulus, $f_\lambda$%
-statistically Cauchy sequence is $f_\lambda$-statistically convergent
sequence.
\end{theorem}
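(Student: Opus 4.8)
The plan is to adapt the classical Fridy-type argument---statistical Cauchyness together with completeness gives statistical convergence---carried out throughout with the modulus density $\delta_{f_\lambda}$.

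First I would isolate the two elementary properties of $\delta_{f_\lambda}$ on which everything rests. Because $f$ is a modulus it is increasing and subadditive, so for $A,B\subseteq\mathbb{N}$ we have $f(\vert(A\cup B)(n)\vert)\le f(\vert A(n)\vert+\vert B(n)\vert)\le f(\vert A(n)\vert)+f(\vert B(n)\vert)$; dividing by $f(\lambda_n)$ and letting $n\to\infty$ shows that a finite union of sets of $f_\lambda$-density zero is again of $f_\lambda$-density zero, while monotonicity of $f$ gives $\delta_{f_\lambda}(A)\le\delta_{f_\lambda}(B)$ whenever $A\subseteq B$. I would also note that $\mathbb{N}$ is \emph{not} $f_\lambda$-negligible: since $\vert\mathbb{N}(n)\vert$ equals the length of the counting window, the ratio $f(\vert\mathbb{N}(n)\vert)/f(\lambda_n)$ is at least $1$, so $\delta_{f_\lambda}(\mathbb{N})\neq 0$. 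In particular any set of $f_\lambda$-density zero is a proper subset of $\mathbb{N}$.

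Next, using the hypothesis I would manufacture a sequence of centres that converges to the eventual limit. For each $k\in\mathbb{N}$ apply the $f_\lambda$-statistical Cauchy condition with $\xi=1/k$ to get an index $Q_k$ for which $B_k:=\{t\in\mathbb{N}:\Vert x_t-x_{Q_k}\Vert>1/k\}$ has $f_\lambda$-density zero. For any $k,l$ the union $B_k\cup B_l$ still has $f_\lambda$-density zero by the subadditivity step, hence is a proper subset of $\mathbb{N}$; choosing any $t\notin B_k\cup B_l$ yields $\Vert x_{Q_k}-x_{Q_l}\Vert\le\Vert x_{Q_k}-x_t\Vert+\Vert x_t-x_{Q_l}\Vert\le 1/k+1/l$. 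Thus $(x_{Q_k})$ is an ordinary Cauchy sequence in $X$, and since $X$ is a Banach space it converges to some $x\in X$.

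Finally I would check that $x$ is the $f_\lambda$-statistical limit. Given $\xi>0$, pick $k$ with $1/k<\xi/2$ and $\Vert x_{Q_k}-x\Vert<\xi/2$. For every $t\notin B_k$ the triangle inequality gives $\Vert x_t-x\Vert\le\Vert x_t-x_{Q_k}\Vert+\Vert x_{Q_k}-x\Vert<\xi$, so $\{t:\Vert x_t-x\Vert\ge\xi\}\subseteq B_k$; by monotonicity of $\delta_{f_\lambda}$ this set has $f_\lambda$-density zero, which is exactly the assertion that $(x_t)$ is $f_\lambda$-statistically convergent to $x$. The main obstacle is the density bookkeeping in the middle paragraph: the whole argument turns on the finite subadditivity of $\delta_{f_\lambda}$ (so that $B_k\cup B_l$ remains negligible) combined with the non-negligibility of $\mathbb{N}$, which is precisely what produces a common good index $t$ and therefore the genuine Cauchyness of the centres $(x_{Q_k})$. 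Once that point is secured, the appeal to completeness and the closing density estimate are entirely routine.
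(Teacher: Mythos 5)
Your proof is correct, but it takes a genuinely different route from the paper's. The paper argues via nested closed balls: it forms $J_k=\bigcap_{i\le k}B(x_{Q_i},\tfrac{1}{i})$, notes $\mathrm{diam}(J_k)\le \tfrac{2}{k}$, observes that the complement $R_k=\{t: x_t\notin J_k\}$ is a finite union of $f_\lambda$-null sets and hence $f_\lambda$-null, extracts the unique point $x\in\bigcap_k J_k$ by completeness, and then builds an explicit exceptional set $D=\bigcup_k([r_k,r_{k+1})\cap R_k)$ of $f_\lambda$-density zero off which the sequence converges to $x$, in the spirit of the decomposition theorem preceding it. You instead prove that the centres $(x_{Q_k})$ form an ordinary Cauchy sequence (by picking a common good index $t\notin B_k\cup B_l$), let $x$ be their limit, and close with the one-line inclusion $\{t:\Vert x_t-x\Vert\ge\xi\}\subseteq B_k$. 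Both arguments turn on the same two density facts -- finite subadditivity of $\delta_{f_\lambda}$ and the non-negligibility of $\mathbb{N}$ (equivalently, that an $f_\lambda$-null set is a proper subset, so a good index exists) -- which you make explicit and the paper leaves implicit; indeed the paper never verifies that the $J_k$ are nonempty, which is exactly this point. Your version is shorter and lands directly on the definition of $f_\lambda$-statistical convergence, while the paper's version yields the slightly stronger by-product of a single explicit $f_\lambda$-null set $D$ outside of which one has genuine convergence.
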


\begin{proof} For every $k \in N$, let $x_{Q_k}$ be such that

$$\delta_{f_\lambda}\left(\left\{t \in \mathbb{N}:  \displaystyle\sum_t \Vert x_t -x_{Q_k} \Vert > \frac{1}{k} \right\}\right)=0.$$

Consider the set $J_k = \cap_{i \leq k} B(x_{Q_i}, \frac{1}{i})$, then  for each $k \in \mathbb{N}$ we have $\mbox{diam}(J_k) \leq \frac{2}{k}$ \\

and $R_k=\{ t \in \mathbb{N}: x_t \notin J_k\}$ so that $R_k=\cup_{i \leq k} \left\{t \in \mathbb{N}: \displaystyle\sum_t \Vert x_t -x_{Q_i} \Vert > \frac{1}{i} \right\}$ which implies $f_\lambda$-density of  $R_k$ is zero. \\

Following the proof of the previous theorem, we get for $t \geq r_k$ then $\frac{f(\vert R_k(t)\vert )}{f(\lambda_t)} \leq \frac{1}{k}, r_k\in R_k$.  \\

Considering $D= \cup_{k \in \mathbb{N}} ([r_k, r_{k+1}) \cap R_k)$, we get $f_\lambda$-density of $D$ is zero.\\

Since $\cap_{k \in \mathbb{N}} J_k$ has exactly one element, say $x$, due to the completeness of $X$. We have to prove that $\displaystyle\lim_{t \in \mathbb{N} \backslash D} \frac{1}{\lambda_n} \Vert x_t -x \Vert=0$.\\

For $\xi > 0$, let us choose $i \in \mathbb{N}$ such that  $\frac{2}{i} < \xi$. If $t \geq r_j$ and $t \in \mathbb{N} \backslash D$ then there exists $k \geq i$ such that $r_k \leq t < r_{k+1}$ and then $t \notin R_k$, which implies $x_t \in J_k$ and thus,

$$ \frac{1}{\lambda_n} \displaystyle\sum_t  \Vert x_t - x \Vert  \leq \frac{2}{k} \leq \frac{2}{i} < \xi.$$

This completes the proof of the theorem.\\
\end{proof}

\begin{corollary}
A sequence which is $f_\lambda$-statistically convergent for each module $f$%
, is also convergent in ordinary sense, i.e. if $B \subseteq \mathbb{N}$ is
infinite, then there exists an unbounded module $f$ such that $f_\lambda$%
-density of $B$ is one. \newline
\end{corollary}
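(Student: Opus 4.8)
The plan is to read the corollary as the nontrivial converse of an easy observation and to let the displayed ``i.e.'' clause do the real work. First I would dispose of the trivial direction for orientation: if $(x_t)\to L$ in the ordinary sense, then for each $\xi>0$ the set $B=\{t:\Vert x_t-L\Vert\ge\xi\}$ is finite, so $f(\vert B\cap I_n\vert)$ stays bounded while $f(\lambda_n)\to\infty$, whence $\delta_{f_\lambda}(B)=0$ for every unbounded modulus $f$. Thus ordinary convergence automatically yields $f_\lambda$-statistical convergence for all $f$, and the real content of the corollary is the reverse implication.

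To prove that reverse implication I would argue by contraposition. Assume $(x_t)$ is $f_\lambda$-statistically convergent for every unbounded modulus $f$; by Corollary 3.1 the limit is independent of $f$, call it $L$. Suppose, for contradiction, that $(x_t)\not\to L$ in the ordinary sense. Then there is a fixed $\xi>0$ for which $B=\{t:\Vert x_t-L\Vert\ge\xi\}$ is infinite, and the whole problem collapses to exhibiting a single unbounded modulus that is ``blind'' to this $B$.

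This is exactly the construction announced in the statement: given the increasing enumeration $B=\{b_1<b_2<\cdots\}$, I would build an unbounded modulus $f$ with $\delta_{f_\lambda}(B)=1$. The guiding idea is to tailor $f$ to the gap structure of $B$, forcing $f$ to rise only negligibly across each gap $[b_k,b_{k+1})$ while still driving $f(u)\to\infty$; concretely one fixes break points aligned with the $b_k$, defines $f$ piecewise on them so that $f(b_{k+1})/f(b_k)\to 1$, and then verifies the four modulus axioms. In practice it is cleaner to obtain $\delta_{f_\lambda}(B)=1$ through the complement identity $\delta_{f_\lambda}(B)=1-\delta_{f_\lambda}(\mathbb{N}\setminus B)$ recorded in Section 2, i.e. to arrange that the tailored $f$ assigns $f_\lambda$-density zero to $\mathbb{N}\setminus B$.

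Finally I would close the loop: since $(x_t)$ is $f_\lambda$-statistically convergent to $L$ with respect to this particular $f$, the definition forces $\delta_{f_\lambda}(B)=0$, contradicting $\delta_{f_\lambda}(B)=1$. Hence no such $\xi$ can exist and $(x_t)\to L$ in the ordinary sense. The main obstacle is plainly the middle step: ensuring that a modulus which is almost flat on every gap of a possibly extremely sparse $B$ is nevertheless subadditive and unbounded, since these two demands pull in opposite directions, so the break points and the prescribed values of $f$ must be chosen with care; moreover, when the window lengths $\lambda_n$ are small one must also check that $B$ meets the windows $I_n$ densely enough for the ratio $f(\vert B\cap I_n\vert)/f(\lambda_n)$ to tend to $1$ rather than merely stay bounded away from $0$.
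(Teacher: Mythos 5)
Your skeleton is the right one, and in fact it is the same skeleton the paper itself relies on: the corollary is stated without proof, and its ``i.e.'' clause (for every infinite $B\subseteq\mathbb{N}$ there is an unbounded modulus $f$ with $\delta_{f_\lambda}(B)=1$) is then invoked as a black box in the proof of the next theorem, exactly as you invoke it. Your treatment of the easy direction and the reduction by contraposition to the set $B=\{t:\Vert x_t-L\Vert\ge\xi\}$ are both fine.

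The genuine gap is that you never construct the modulus, and that construction \emph{is} the corollary: everything else in your argument is routine. Saying that one ``fixes break points aligned with the $b_k$, defines $f$ piecewise so that $f(b_{k+1})/f(b_k)\to 1$, and then verifies the four modulus axioms'' is a statement of intent, not a proof; subadditivity, monotonicity and unboundedness genuinely fight against near-flatness on arbitrarily long gaps, and you acknowledge this tension without resolving it. (For ordinary $f$-density this is the construction of Aizpuru, List\'an-Garc\'ia and Rambla-Barreno, and it would need to be carried out, or at least correctly cited and adapted, here.) Worse, the adaptation to the $\lambda$-setting is not cosmetic: you correctly observe that the density is computed against the windows $I_n=[n-\lambda_n+1,n]$, and if $B$ is sparse and $\lambda_n$ grows slowly then $B\cap I_n$ is empty for infinitely many $n$, forcing $f(\vert B\cap I_n\vert)=f(0)=0$ infinitely often no matter which modulus you pick — so with that reading of $\delta_{f_\lambda}$ no choice of $f$ can make the density equal $1$, and the statement itself needs either the reading $\vert B(n)\vert=\vert B\cap[1,n]\vert$ or a hypothesis on $\lambda$. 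You flag this as something ``one must also check'' but leave it unchecked; until the definition is pinned down and the modulus is actually exhibited and verified, the proof is incomplete at precisely the point where the corollary has content.
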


\begin{theorem}
Let $(x_t)$ be a sequence in $X$. If for every unbounded modulus $f$ there
exists $f_\lambda$-st$\displaystyle\lim x_t$ then all these limits are the
same $x \in X$ and $(x_t)$ is also $\lambda$-summable to $x$ in the norm
topology.\newline
\end{theorem}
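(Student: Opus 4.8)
The plan is to deduce the whole statement from the preceding corollary, which already records that a sequence which is $f_\lambda$-statistically convergent for every unbounded modulus converges in the ordinary norm sense. So the first step is to invoke that corollary to produce a single point $x\in X$ with $\Vert x_t-x\Vert\to 0$. The mechanism behind that corollary is the set-theoretic fact that every infinite $B\subseteq\mathbb{N}$ admits an unbounded modulus $g$ with $\delta_{g_\lambda}(B)=1$; I would keep that argument contrapositive, so that if $(x_t)$ failed to converge in norm, some $\xi$-exceptional set would be infinite, the associated modulus $g$ would force $\delta_{g_\lambda}$ of an exceptional set to be positive, and this would contradict the hypothesised $g_\lambda$-statistical convergence. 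Thus no new work is needed here beyond citing the corollary and naming its limit $x$.

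Once $x$ is in hand, the coincidence of all the limits is almost immediate. Ordinary convergence forces $f_\lambda$-statistical convergence to the \emph{same} $x$ for every unbounded modulus $f$: for each $\xi>0$ the set $A=\{t:\Vert x_t-x\Vert\ge\xi\}$ is finite, so $\vert A(n)\vert$ stays bounded by $\vert A\vert$ while $f(\lambda_n)\to\infty$ (since $f$ is unbounded and increasing and $\lambda_n\to\infty$), whence $\delta_{f_\lambda}(A)=\lim_n f(\vert A(n)\vert)/f(\lambda_n)=0$. Combining this with the uniqueness of the $f_\lambda$-statistical limit for a fixed modulus, already established in Corollary 3.1, shows that the limit postulated for each $f$ must equal $x$, so all of them agree.

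For the final assertion I would prove the strong estimate $\frac{1}{f(\lambda_n)}$ aside and work instead with $\frac{1}{\lambda_n}\sum_{t\in I_n}\Vert x_t-x\Vert\to 0$, which yields $\lambda$-summability to $x$ in the norm topology via the triangle inequality $\bigl\Vert\frac{1}{\lambda_n}\sum_{t\in I_n}(x_t-x)\bigr\Vert\le\frac{1}{\lambda_n}\sum_{t\in I_n}\Vert x_t-x\Vert$. Given $\varepsilon>0$, I would pick $T$ with $\Vert x_t-x\Vert<\varepsilon/2$ for $t>T$; the indices of $I_n$ exceeding $T$ then contribute less than $\varepsilon/2$ because there are at most $\lambda_n$ of them, while the at most $T$ remaining indices contribute at most $TM/\lambda_n$, where $M=\sup_t\Vert x_t-x\Vert<\infty$ as convergent sequences are bounded. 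The main obstacle is exactly this split: since the constraints on $\lambda$ permit $\lambda_n=n$, the left endpoint $n-\lambda_n+1$ of $I_n$ need not tend to infinity, so one cannot assume every index in the window is large; it is the boundedness of $(x_t)$ together with $\lambda_n\to\infty$ that renders the early terms negligible, and this is the step demanding care. Letting $n\to\infty$ gives $\frac{1}{\lambda_n}\sum_{t\in I_n}\Vert x_t-x\Vert<\varepsilon$ for large $n$, completing the proof.
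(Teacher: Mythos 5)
Your proposal follows essentially the same route as the paper: both rest on the preceding corollary's contradiction mechanism (an infinite exceptional set $B$ would admit an unbounded modulus giving it $f_\lambda$-density one, contradicting the hypothesised statistical convergence) to extract norm convergence to a single point $x$, and on the already-established uniqueness of $f_\lambda$-statistical limits to identify all the limits with $x$. You in fact go further than the paper's own proof, which stops at the contradiction: your explicit check that a finite exceptional set has $f_\lambda$-density zero (so that the limit for \emph{every} modulus must equal $x$), and your window-splitting estimate deducing $\lambda$-summability from norm convergence --- including the correct observation that the left endpoint of $I_n=[n-\lambda_n+1,n]$ need not tend to infinity, so the boundedness of the convergent sequence must absorb the finitely many early terms --- supply steps the paper leaves entirely implicit.
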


\begin{proof}  It is proved that for $f,g$ two unbounded moduli, the $f_\lambda$-statistical limit is unique whenever it exists. \\

Let $X$ a normed space, $(x_t)$ a sequence in $X$ and $P, Q \in X$ and if the uniqueness is false that $\displaystyle\lim x_t =P$, there exists $\xi > 0$ such that

$$B=\left\{ t \in \mathbb{N}:  \displaystyle\sum_p \Vert x_t -P \Vert > \xi \right\}$$

is infinite.\\

Now, by choosing an unbounded modulus $f$ which will satisfy $\delta_{f_\lambda}(B)=1$, then this clearly contradicts the assumption that $f_\lambda$-stlim $x_t=x$, which completes the proof of the Theorem.\\
\end{proof}

\begin{theorem}
If $\lambda \in \Lambda$ with $\displaystyle\lim_{n \rightarrow \infty} inf
\frac{n}{f(\lambda_n)}>0$ , then $f_\lambda$-statistically convergent
sequences are $f$-statistical convergent.
\end{theorem}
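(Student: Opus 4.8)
The plan is to fix $\xi>0$, set $A=A_\xi=\{t\in\mathbb{N}:\Vert x_t-L\Vert\ge\xi\}$, and deduce that the $f$-density of $A$ is zero from the assumption that its $f_\lambda$-density is zero. Before anything else I would record the structural facts that the argument rests on. Since $\lambda\in\Lambda$ we have $\lambda_1=1$ and $\lambda_{n+1}\le\lambda_n+1$, so by induction $\lambda_n\le n$ for every $n$, and hence $f(\lambda_n)\le f(n)$ because $f$ is increasing. Moreover, unpacking the hypothesis $\liminf_n \tfrac{n}{f(\lambda_n)}>0$ produces a constant $K>0$ and an index $N$ with $f(\lambda_n)\le n/K$ for all $n\ge N$; equivalently $\tfrac1n\le\tfrac1{K\,f(\lambda_n)}$ for large $n$.

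The first step is to convert the $f(\lambda_n)$-normalisation appearing in the $f_\lambda$-density into an $n$-normalisation. Writing $I_n=[n-\lambda_n+1,n]$, the inequality just obtained gives $\tfrac{f(|A\cap I_n|)}{n}\le\tfrac1K\,\tfrac{f(|A\cap I_n|)}{f(\lambda_n)}$, and since the right-hand side tends to $0$ by the vanishing of $\delta_{f_\lambda}(A)$, I conclude $\tfrac{f(|A\cap I_n|)}{n}\to 0$. This is the step where the $\liminf$ hypothesis genuinely enters. The second step is to pass from the windowed counts $|A\cap I_n|$, which govern the $f_\lambda$-density, to the cumulative counts $|A\cap[1,n]|$, which govern the $f$-density. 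For this I would tile the initial segment by consecutive $\lambda$-blocks: put $n_0=n$ and $n_{j+1}=n_j-\lambda_{n_j}$, so that the disjoint windows $I_{n_0},I_{n_1},\dots$ exhaust $[1,n]$ down to a bounded remainder, and then exploit subadditivity of $f$ to write $f(|A\cap[1,n]|)\le\sum_j f(|A\cap I_{n_j}|)$. Combining this decomposition with the block estimates from the first step, together with a lower bound of the form $f(n)\ge c\,n$, should deliver $\tfrac{f(|A\cap[1,n]|)}{f(n)}\to 0$, that is, $\delta_f(A)=0$, and since $\xi$ is arbitrary this is exactly $f$-statistical convergence of $(x_t)$ to $L$.

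The hard part is precisely this last aggregation. Because $f$ is only subadditive, summing the block contributions $f(|A\cap I_{n_j}|)$ inflates the bound by the number of blocks, so everything hinges on controlling both how many windows $I_{n_j}$ are needed to tile $[1,n]$ and the size of the resulting sum $\sum_j f(\lambda_{n_j})$ relative to $f(n)$. Making this closure work forces one to keep the block sizes from degenerating — which is comfortable when the windows shrink geometrically but delicate otherwise — and I would expect this to be the step demanding the most care, possibly requiring one to lean on the additional regularity of $f$ (for instance a positive $\lim_{u}f(u)/u$, as assumed for the preceding theorems) rather than on the $\liminf$ condition alone. The routine ratio comparison $f(\lambda_n)\le f(n)$ is easy; it is the windowed-to-cumulative aggregation that is the real obstacle.
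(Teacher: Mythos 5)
Your structural diagnosis is right, but as written the proposal is not a proof: the entire content of the theorem sits in the window-to-cumulative aggregation that you explicitly leave open (``should deliver'', ``the real obstacle''), so there is a genuine gap. The good news is that your sketch does close with tools you already name. With $n_0=n$ and $n_{j+1}=n_j-\lambda_{n_j}$ the windows $I_{n_j}$ are disjoint and tile $[1,n]$ down to a bounded remainder, and subadditivity of a modulus gives the a priori bound $f(u)\le f(1)(u+1)$, hence $\sum_j f(\lambda_{n_j})\le 2f(1)\sum_j\lambda_{n_j}\le 2f(1)\,n$. Given $\epsilon>0$, choose $N$ with $f(\vert A\cap I_m\vert)\le\epsilon f(\lambda_m)$ for $m\ge N$; the blocks with $n_j\ge N$ then contribute at most $2f(1)\epsilon n$ in total, while the blocks with $n_j<N$ are disjoint subsets of $[1,N]$ and contribute a quantity bounded independently of $n$. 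Thus $f(\vert A\cap[1,n]\vert)\le 2f(1)\epsilon n+O(1)$, and dividing by $f(n)$ and using $\liminf_u f(u)/u>0$ --- the extra regularity you correctly anticipate needing, and which the paper assumes in the neighbouring theorems --- yields $\delta_f(A)=0$. Note also that the hypothesis actually displayed, $\liminf_n n/f(\lambda_n)>0$, is vacuous (every modulus satisfies $f(\lambda_n)\le f(n)\le f(1)(n+1)$, so the ratio is bounded below by a positive constant automatically), so it cannot be the condition doing the work; your instinct to look elsewhere for the operative assumption is sound.

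For comparison, the paper's own proof is entirely different and never confronts the aggregation issue: it starts from the inclusion $\{t\le n:\vert x_t-L\vert\ge\xi\}\supseteq\{t\in I_n:\vert x_t-L\vert\ge\xi\}$ and writes a two-line chain whose second step is just the identity $\frac{f(n)}{n}\cdot\frac{n}{f(\lambda_n)}\cdot\frac{1}{f(n)}=\frac{1}{f(\lambda_n)}$. That chain bounds $\frac{1}{f(\lambda_n)}f(\vert\{t\le n:\dots\}\vert)$ from \emph{below} by the $f_\lambda$-density expression; it never produces an upper bound on the $f$-density quantity $\frac{1}{f(n)}f(\vert\{t\le n:\dots\}\vert)$, which is what must tend to zero. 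The inclusion argument it gestures at is the template for the \emph{converse} implication ($f$-statistical convergence implies $f_\lambda$-statistical convergence, as in the theorem that follows), not the one stated. So your route, once the tiling estimate is written out, is the one that actually proves the stated inclusion, whereas the paper's argument does not establish it.
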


\begin{proof} For given $\xi>0$, we have,

$$\{ t \leq n : \vert x_t -L \vert \geq \xi \} \supset \{ t \in I_n: \vert x_t -L \vert \geq \xi \}.$$

Therefore,

\begin{eqnarray*}
\frac{1}{f(\lambda_n)} f(\vert \{ t \leq n :\vert x_t - L \vert \geq \xi \} \vert ) &\geq& \frac{1}{f(\lambda_n)} f(\vert \{ t \in I_n :\vert x_t -L \vert \geq \xi \}\vert)\\
&\geq& \frac{f(n)}{n} \frac{n}{f(\lambda_n)} \frac{1}{f(n)} f(\vert \{ t \in I_n: \vert x_t -L \vert \geq \xi \} \vert )
\end{eqnarray*}

Taking the limit as $n \rightarrow \infty$ and using the fact that $\displaystyle\lim_n \frac{f(n)}{n} >0$, we get $(x_t)$ is $f_\lambda$-statistical convergent implies $(x_t)$ is $f$-statistical convergent.
\end{proof}

\begin{theorem}
Let $\lambda\in \Lambda$ be such that $\displaystyle\lim_n \frac{n}{%
f(\lambda_n)} =1$. Then, $f$-statistical convergence is $f_\lambda$%
-statistical convergence.
\end{theorem}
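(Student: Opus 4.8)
The plan is to run the comparison of the preceding theorem in the opposite direction, exploiting the inclusion $I_n\subseteq\{1,2,\ldots,n\}$, the hypothesis $\lim_n\frac{n}{f(\lambda_n)}=1$, and the subadditivity of $f$. First I would fix $\xi>0$, assume $(x_t)$ is $f$-statistically convergent to $L$, and put $K=\{t\in\mathbb{N}:|x_t-L|\geq\xi\}$; the hypothesis then says precisely that $\frac{f(|\{t\leq n:|x_t-L|\geq\xi\}|)}{f(n)}\to 0$ as $n\to\infty$.

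Since $\lambda\in\Lambda$ satisfies $\lambda_1=1$ and $\lambda_{n+1}\leq\lambda_n+1$, we have $\lambda_n\leq n$, so $I_n=[n-\lambda_n+1,n]\subseteq\{1,\ldots,n\}$. Hence $\{t\in I_n:|x_t-L|\geq\xi\}\subseteq\{t\leq n:|x_t-L|\geq\xi\}$, and since the modulus $f$ is increasing,
$$f(|\{t\in I_n:|x_t-L|\geq\xi\}|)\leq f(|\{t\leq n:|x_t-L|\geq\xi\}|).$$
Dividing by $f(\lambda_n)>0$ and inserting the factor $f(n)/f(n)$, I would rewrite this as
$$\frac{f(|\{t\in I_n:|x_t-L|\geq\xi\}|)}{f(\lambda_n)}\leq\frac{f(|\{t\leq n:|x_t-L|\geq\xi\}|)}{f(n)}\cdot\frac{f(n)}{f(\lambda_n)}.$$

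The crux is to show that the second factor $\frac{f(n)}{f(\lambda_n)}$ stays bounded. Writing $\frac{f(n)}{f(\lambda_n)}=\frac{f(n)}{n}\cdot\frac{n}{f(\lambda_n)}$, I would use subadditivity (adding $f(a+b)\leq f(a)+f(b)$ over $n$ copies of $1$) to obtain $f(n)\leq nf(1)$, so that $\frac{f(n)}{n}\leq f(1)$; together with $\frac{n}{f(\lambda_n)}\to 1$ this forces $\frac{f(n)}{f(\lambda_n)}$ to be bounded for all large $n$. This is the step I expect to be the main obstacle, because it is exactly where the structural property of $f$ and the growth hypothesis on $\lambda$ must cooperate: without the bound $f(n)/n\leq f(1)$ the denominators $f(n)$ and $f(\lambda_n)$ could not be exchanged.

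Finally, letting $n\to\infty$ in the displayed inequality, the first factor on the right tends to $0$ by the assumed $f$-statistical convergence while the second stays bounded, so the product tends to $0$. As $\xi>0$ was arbitrary, this gives $\delta_{f_\lambda}(\{t:|x_t-L|\geq\xi\})=0$ for every $\xi$, i.e. $(x_t)$ is $f_\lambda$-statistically convergent to $L$. I note in passing that, combined with the preceding theorem (whose hypothesis $\liminf_n\frac{n}{f(\lambda_n)}>0$ is implied by $\lim_n\frac{n}{f(\lambda_n)}=1$), this yields the full equivalence of $f$- and $f_\lambda$-statistical convergence under the stated condition.
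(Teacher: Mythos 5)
Your proposal is correct, and it takes a genuinely different route from the paper's. You work directly from the containment $\{t\in I_n:\vert x_t-L\vert\geq\xi\}\subseteq\{t\leq n:\vert x_t-L\vert\geq\xi\}$ and convert the normalizer via the factorization $\frac{f(n)}{f(\lambda_n)}=\frac{f(n)}{n}\cdot\frac{n}{f(\lambda_n)}$, using subadditivity in the form $f(n)\leq nf(1)$ to see that the switch from $f(n)$ to $f(\lambda_n)$ costs only a bounded factor; the conclusion is then a null sequence times a bounded one. The paper instead splits $\{t\leq n:\vert x_t-L\vert\geq\xi\}$ into the part below $n-\lambda_n+1$ and the part in $I_n$, applies subadditivity of $f$ (its displayed equality should really be an inequality), and bounds the first piece by $\frac{f(n-\lambda_n+1)}{f(n)}$, asserting that the hypothesis makes this smaller than $\frac{\beta}{2}$ eventually. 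Your argument buys two things the paper's does not. First, your inequality estimates the $f_\lambda$-density expression from above by the $f$-density expression, which is the direction the stated implication needs; the paper's final inequality runs the other way (it bounds $\frac{1}{f(n)}f(\vert\{t\leq n:\cdots\}\vert)$ by $\frac{\beta}{2}+\frac{1}{f(\lambda_n)}f(\vert\{t\in I_n:\cdots\}\vert)$, which as written would give $S_{f_\lambda}\subseteq S_f$ rather than $S_f\subseteq S_{f_\lambda}$). Second, the paper's passage from $\lim_n\frac{n}{f(\lambda_n)}=1$ to $\frac{f(n-\lambda_n+1)}{f(n)}<\frac{\beta}{2}$ is not justified by anything in the text, whereas each of your steps follows from the listed axioms of a modulus. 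Your closing observation is also sound: only boundedness of $\frac{n}{f(\lambda_n)}$ is actually used, and since $\lim_n\frac{n}{f(\lambda_n)}=1$ implies the $\liminf$ hypothesis of the preceding theorem, the two modes of convergence coincide under the stated condition.
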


\begin{proof} Let $\beta > 0$ be given. Since $\displaystyle\lim_n \frac{n}{f(\lambda_n)}=1$, we can choose $m \in N$ such that, $\frac{f(n-\lambda_n+1)}{f(n)} < \frac{\beta}{2}$ for all $n \geq m$.\\

Now observe that,

\begin{eqnarray*}
\frac{1}{f(n)} f(\vert \{ t \leq n: \vert x_t -L \vert  \geq \xi\} \vert )
&=& \frac{1}{f(n)} f (\vert \{ t < n-\lambda_n+1 : \vert x_t -L \vert \geq \xi \} \vert ) \\
&+& \frac{1}{f(n)} f (\vert \{ t \in I_n : \vert x_t -L \vert \geq \xi \} \vert )\\
&<& \frac{f(n -\lambda_n +1)}{f(n)} + \frac{1}{f(n)} f(\vert \{ t \in I_n : \vert x_t -L \vert \geq \xi \} \vert )\\
&<& \frac{\beta}{2} + \frac{1}{f(\lambda_n)} f ( \vert \{ t \in I_n : \vert x_t - L \vert \geq \xi \} \vert )
\end{eqnarray*}

Taking limit as $n \rightarrow \infty$, we get, $(x_t)$ is $f$-statistically convergent is $f_\lambda$-statistically convergent.
\end{proof}

\begin{theorem}
Let $\lambda=(\lambda_n)$ and $\mu=(\mu_n)$ be two sequences in $\Lambda$
such that $\lambda_n \leq \mu_n$ for all $n \in \mathbb{N}_{n_0}$.

\begin{enumerate}
\item If $\lim \displaystyle\inf_{n \rightarrow \infty} \frac{\lambda_n}{%
\mu_n} > 0$ then $S_{f_\mu} \subseteq S_{f_\lambda}$.

\item If $\displaystyle\lim_{n \rightarrow \infty} \frac{\mu_n}{\lambda_n} =1
$ then $S_{f_\lambda} \subseteq S_{f_\mu}$.
\end{enumerate}
\end{theorem}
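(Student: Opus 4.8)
The plan is to run both inclusions through the classical Mursaleen-type comparison of the blocks $I_n$ and $J_n$, letting the subadditivity and monotonicity of the modulus $f$ play the role that the identity map plays in the unweighted setting. Throughout I would fix the notation $I_n=[n-\lambda_n+1,n]$ and $J_n=[n-\mu_n+1,n]$, and for a sequence $(x_t)$ with candidate limit $L$ and a fixed $\xi>0$ I would abbreviate $K_n^\lambda=\{t\in I_n:|x_t-L|\geq\xi\}$ and $K_n^\mu=\{t\in J_n:|x_t-L|\geq\xi\}$. Since $\lambda_n\leq\mu_n$ for $n\geq n_0$, the block $J_n$ contains $I_n$, so $K_n^\lambda\subseteq K_n^\mu$ and, $f$ being increasing, $f(|K_n^\lambda|)\leq f(|K_n^\mu|)$.

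For part (1) I would begin from this inclusion and insert the normalising factor $f(\mu_n)$:
$$\frac{f(|K_n^\lambda|)}{f(\lambda_n)}\;\leq\;\frac{f(|K_n^\mu|)}{f(\lambda_n)}\;=\;\frac{f(\mu_n)}{f(\lambda_n)}\cdot\frac{f(|K_n^\mu|)}{f(\mu_n)}.$$
If $(x_t)\in S_{f_\mu}$ the last factor tends to $0$, so the inclusion $S_{f_\mu}\subseteq S_{f_\lambda}$ follows once the ratio $f(\mu_n)/f(\lambda_n)$ is bounded above. This is exactly where the hypothesis $\liminf_n \lambda_n/\mu_n>0$ enters: it provides $\delta>0$ with $\mu_n<\lambda_n/\delta$ eventually, and choosing an integer $m\geq 1/\delta$ and iterating subadditivity gives $f(\mu_n)\leq f(m\lambda_n)\leq m\,f(\lambda_n)$. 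Hence $f(\mu_n)/f(\lambda_n)\leq m$ eventually, the displayed quantity is squeezed to $0$, and $(x_t)\in S_{f_\lambda}$ with the same limit $L$.

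For part (2) I would instead split the longer block as $J_n=[n-\mu_n+1,\,n-\lambda_n]\cup I_n$. The left interval contains $\mu_n-\lambda_n$ integers, so $|K_n^\mu|\leq(\mu_n-\lambda_n)+|K_n^\lambda|$; applying $f$ and then subadditivity gives $f(|K_n^\mu|)\leq f(\mu_n-\lambda_n)+f(|K_n^\lambda|)$. Dividing by $f(\mu_n)$ and using $f(\mu_n)\geq f(\lambda_n)$ on the second summand produces
$$\frac{f(|K_n^\mu|)}{f(\mu_n)}\;\leq\;\frac{f(\mu_n-\lambda_n)}{f(\mu_n)}+\frac{f(|K_n^\lambda|)}{f(\lambda_n)}.$$
If $(x_t)\in S_{f_\lambda}$ the second term vanishes in the limit, so the whole inclusion $S_{f_\lambda}\subseteq S_{f_\mu}$ reduces to showing that the first term does too.

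The main obstacle is this last step. In the unweighted theory the analogous quantity is $(\mu_n-\lambda_n)/\mu_n=1-\lambda_n/\mu_n$, which tends to $0$ immediately from $\mu_n/\lambda_n\to 1$; but after composing with $f$ there is no general reason for $f(\mu_n-\lambda_n)/f(\mu_n)$ to vanish, and for very slowly growing moduli (e.g.\ $f(u)=\log(1+u)$) it can in fact fail. I would therefore control it by a regime split: if $\mu_n-\lambda_n$ stays bounded then $f(\mu_n-\lambda_n)$ is bounded while $f(\mu_n)\to\infty$ (as $f$ is unbounded and $\mu_n\to\infty$), so the ratio tends to $0$; and if $\mu_n-\lambda_n\to\infty$ I would invoke the regularity condition $\lim_{u\to\infty}f(u)/u>0$ already standing in this section, under which $f(\mu_n-\lambda_n)/f(\mu_n)$ is comparable to $(\mu_n-\lambda_n)/\mu_n=1-\lambda_n/\mu_n\to 0$. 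Either way the first term vanishes and $S_{f_\lambda}\subseteq S_{f_\mu}$ follows. This modulus-to-ratio comparison is the only genuinely non-routine point; the set-theoretic block comparisons in both parts are exactly as in the classical $\lambda$-statistical setting.
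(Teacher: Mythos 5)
Your overall skeleton is the same as the paper's: the inclusion $I_n\subseteq J_n$, the containment of the exceptional sets, and for part (2) the splitting of $J_n$ into $[n-\mu_n+1,\,n-\lambda_n]$ and $I_n$. The material difference is the level at which the estimate is carried out. The paper proves the inequalities for the \emph{plain} densities $\frac{1}{\mu_n}\vert\{t\in J_n:\vert x_t-L\vert\geq\xi\}\vert$ and $\frac{1}{\lambda_n}\vert\{t\in I_n:\vert x_t-L\vert\geq\xi\}\vert$ and then asserts that the conclusion for the $f$-densities follows ``by the definition of modulus function''; you instead work directly with $f(\vert K_n^\lambda\vert)/f(\lambda_n)$ and $f(\vert K_n^\mu\vert)/f(\mu_n)$. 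For part (1) this is only a gain in rigor: your step $f(\mu_n)\leq f(m\lambda_n)\leq m f(\lambda_n)$ is exactly the justification the paper's phrase ``$\liminf f(\lambda_n)/f(\mu_n)>0$ by the property of the modulus'' is silently relying on, and your argument is complete. For part (2) your more careful execution exposes a genuine defect in the paper's proof: after composing with $f$, the error term is $f(\mu_n-\lambda_n)/f(\mu_n)$, and the hypothesis $\mu_n/\lambda_n\to 1$ only kills $(\mu_n-\lambda_n)/\mu_n$, not its image under a slowly growing modulus such as $f(u)=\log(1+u)$. The paper never confronts this term because it never leaves the unweighted setting, so its proof of (2) does not actually establish the stated inclusion. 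Your repair --- a regime split plus the condition $\lim_{u\to\infty}f(u)/u>0$ --- is sound, but be explicit that this condition is \emph{not} a hypothesis of the theorem as stated (it appears only in Theorems 3.1 and 3.2, not as a standing assumption), so what you have proved is a corrected version of part (2) under an additional assumption on $f$; note also that under that assumption $f(u)\asymp u$, so the modulus essentially trivializes there. A small technical point: since $\mu_n-\lambda_n$ may be neither bounded nor tending to infinity, run your regime split along subsequences to make the limit argument airtight.
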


\begin{proof} 1. Suppose that $\lambda_n \leq \mu_n$ for all $n \in \mathbb{N}_{n_0}$ which implies that $f(\lambda_n) \leq f(\mu_n)$.\\

Given $\lim\displaystyle\inf_{n \rightarrow \infty} \frac{\lambda_n}{\mu_n} > 0$, then with the property of modulus function, we get $\lim\displaystyle\inf_{n \rightarrow \infty} \frac{f(\lambda_n)}{f(\mu_n)} >0$. Also, since $\lambda_n \leq \mu_n$ for all $n \in \mathbb{N}_{n_0}$, so $I_n \subset J_n$ where $I_n=[n - \lambda_n+1,n]$ and $J_n=[n-\mu_n+1, n ]$.\\

Now, for $\xi>0$, we can write,

$$\{t \in J_n: \vert x_t-L \vert \geq \xi\} \supset \{t \in I_n:\vert x_t - L \vert \geq \xi \}$$

and so,\\

$\frac{1}{\mu_n} \vert \{ t \in J_n:\vert x_t-L \vert \geq \xi \}\vert \geq \frac{\lambda_n}{\mu_n} \frac{1}{\lambda_n}\vert \{ t \in I_n: \vert x_t -L \vert \geq \xi\} \vert$, for all $n \in \mathbb{N}_{n_0}$. \\

Now, by using the definition of modulus function and satisfying $\lim \displaystyle\inf_{n \rightarrow \infty} \frac{\lambda_n}{\mu_n} > 0$ and also taking limit as $n \rightarrow \infty$, we get $S_{f_\mu}\subseteq S_{f_\lambda}$.
\end{proof}

\begin{proof} 2. Let $S_{f_\lambda}-\lim x_t =x$ and $\displaystyle\lim_{n \rightarrow \infty} \frac{\mu_n}{\lambda_n} =1$. \\

Since $I_n \subset J_n$, for $\xi >0$, we write,

\begin{eqnarray*}
\frac{1}{\mu_n} \vert \{ t \in J_n:\vert x_t -L \vert \geq \xi \} \vert &=& \frac{1}{\mu_n} \vert \{ n -\mu_n +1 \leq k \leq n-\lambda_n : \vert x_t -L \vert \geq \xi \} \vert + \frac{1}{\mu_n} \vert \{ t \in I_n:\vert x_t -L \vert \geq \xi \} \vert\\
&\leq& \frac{\mu_n -\lambda_n}{\mu_n} + \frac{1}{\mu_n} \vert \{ t \in I_n: \vert x_t -L \vert \geq \xi \} \vert\\
&\leq& \frac{\mu_n -\lambda_n}{\lambda_n} + \frac{1}{\mu_n} \vert \{ t \in I_n: \vert x_t -L \vert \geq \xi \} \vert\\
&\leq& \left(\frac{\mu_n}{\lambda_n} -1 \right) + \frac{1}{\lambda_n} \vert \{ t \in I_n: \vert x_t -L \vert \geq \xi \} \vert, \mbox{~for all ~} n \in \mathbb{N}_{n_0}.
\end{eqnarray*}

Since $\displaystyle\lim_{n \rightarrow \infty} \frac{\mu_n}{\lambda_n} =1$ and using the definition of modulus function, we can say that $x=(x_t)$ is $S_{f_\mu}$-statistically convergent is $S_{f_\lambda}$-statistcially convergent sequence, so $S_{f_\lambda} \subseteq S_{f_\mu}$.
\end{proof}

\begin{corollary}
Let $\lambda=(\lambda_n)$ and $\mu=(\mu_n)$ be two sequences in $\Lambda$
such that $\lambda_n \leq \mu_n$ for all $n \in \mathbb{N}_{n_0}$. If $\lim%
\displaystyle\inf_{n \rightarrow \infty} \frac{\lambda_n}{\mu_n} >0$, then $%
w_{f_\mu} \subset w_{f_\lambda}$.
\end{corollary}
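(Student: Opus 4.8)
The plan is to imitate, in the strong-summability setting, the argument that produced $S_{f_\mu}\subseteq S_{f_\lambda}$ in part~1 of the preceding theorem, replacing the cardinality of the exceptional set by the sum that defines strong $f_\lambda$-summability. First I would fix an arbitrary $x=(x_t)\in w_{f_\mu}$, with $f_\mu$-limit $L$, so that
\[
\frac{1}{f(\mu_n)}\sum_{t\in J_n} f(\vert x_t-L\vert)\longrightarrow 0,\qquad J_n=[n-\mu_n+1,\,n].
\]
Since $\lambda_n\leq\mu_n$ for all $n\in\mathbb{N}_{n_0}$, the window $I_n=[n-\lambda_n+1,\,n]$ satisfies $I_n\subseteq J_n$, and because every summand $f(\vert x_t-L\vert)$ is non-negative I obtain $\sum_{t\in I_n} f(\vert x_t-L\vert)\leq \sum_{t\in J_n} f(\vert x_t-L\vert)$ for all such $n$.

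The key step is to pass from $\lambda_n\leq\mu_n$ and $\liminf_{n} \lambda_n/\mu_n>0$ to a comparison of $f(\lambda_n)$ with $f(\mu_n)$. Exactly as recorded in the proof of the preceding theorem, the monotonicity and subadditivity of the modulus give $f(\lambda_n)\leq f(\mu_n)$ together with $\liminf_{n} f(\lambda_n)/f(\mu_n)>0$; hence there are $\delta>0$ and $N\geq n_0$ with $f(\lambda_n)\geq \delta\, f(\mu_n)$ for all $n\geq N$. Combining this with the inclusion above yields, for $n\geq N$,
\[
\frac{1}{f(\lambda_n)}\sum_{t\in I_n} f(\vert x_t-L\vert)
\;\leq\; \frac{1}{\delta\, f(\mu_n)}\sum_{t\in J_n} f(\vert x_t-L\vert).
\]
Letting $n\to\infty$, the right-hand side tends to $0$, so the left-hand side does too, i.e.\ $x\in w_{f_\lambda}$ with the same limit $L$. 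As $x$ was arbitrary, this gives $w_{f_\mu}\subseteq w_{f_\lambda}$, which is the desired conclusion.

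The only genuinely delicate point I foresee is the inequality $\liminf_{n} f(\lambda_n)/f(\mu_n)>0$: for a general unbounded modulus one cannot simply distribute $f$ across the ratio $\lambda_n/\mu_n$, so one must lean on the same property of $f$ invoked in part~1 of the previous theorem, where $\liminf_{n}\lambda_n/\mu_n>0$ is upgraded to $\liminf_{n} f(\lambda_n)/f(\mu_n)>0$. Everything else is the monotone domination of a sub-sum by its super-sum; once that comparison lemma is granted the corollary follows at once, the statement being precisely the $w$-space transcription of the $S$-space inclusion already established.
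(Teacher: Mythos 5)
Your proof is correct, and it is exactly the argument the paper intends: the corollary is stated without proof, as the strong-summability transcription of part~1 of the preceding theorem, and your argument (the inclusion $I_n\subseteq J_n$ giving domination of the sub-sum, plus upgrading $\liminf_n \lambda_n/\mu_n>0$ to $\liminf_n f(\lambda_n)/f(\mu_n)>0$ via subadditivity of the modulus, e.g.\ $\mu_n\leq m\lambda_n$ implies $f(\mu_n)\leq m f(\lambda_n)$) is precisely that transcription. Nothing further is needed.
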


\begin{theorem}
Let $\lambda_n \leq \mu_n$ for all $n \in \mathbb{N}_{n_0}$, then, if $\lim %
\displaystyle\inf_{n \rightarrow\infty} \frac{\lambda_n}{\mu_n} >0$, then a
strongly $w_{f_\mu}$-summable sequence is $S_{f_\lambda}$-statistically
convergent sequence.
\end{theorem}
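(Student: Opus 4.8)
The plan is to split the argument into two stages: first show that strong $f_\mu$-summability already forces $f_\mu$-statistical convergence, and then transfer this to $f_\lambda$-statistical convergence by invoking the inclusion $S_{f_\mu}\subseteq S_{f_\lambda}$ of Theorem 3.7(1), whose hypothesis $\liminf_n \lambda_n/\mu_n>0$ is exactly what we are given. Throughout, $f$ is the standing unbounded modulus satisfying $f(xy)\geq cf(x)f(y)$, and I abbreviate $J_n=[n-\mu_n+1,n]$ and $K_n(\xi)=\{t\in J_n:|x_t-L|\geq\xi\}$, with $L$ the $f_\mu$-summability limit of $(x_t)$.

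For the first stage I would transcribe the inequality chain of Theorem 3.1 onto the block $J_n$ rather than onto $\{1,\dots,n\}$. Fix $\xi>0$. Iterated subadditivity of $f$ gives $\sum_{t\in J_n}f(|x_t-L|)\geq f\bigl(\sum_{t\in J_n}|x_t-L|\bigr)$; monotonicity lets me discard the indices outside $K_n(\xi)$ and bound each surviving term below by $\xi$, so that the right side is at least $f(|K_n(\xi)|\,\xi)$; and the multiplicative property yields $f(|K_n(\xi)|\,\xi)\geq c\,f(|K_n(\xi)|)\,f(\xi)$. Dividing by $f(\mu_n)$ gives
\[
\frac{f(|K_n(\xi)|)}{f(\mu_n)}\leq \frac{1}{c\,f(\xi)}\cdot\frac{1}{f(\mu_n)}\sum_{t\in J_n}f(|x_t-L|).
\]
Since $(x_t)$ is strongly $f_\mu$-summable to $L$, the right-hand side tends to $0$, hence $\delta_{f_\mu}(\{t:|x_t-L|\geq\xi\})=0$; as $\xi$ is arbitrary, $(x_t)\in S_{f_\mu}$ with limit $L$. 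Note that because $f(\mu_n)$ sits directly in the denominator here, this stage does not even need the extra hypothesis $\lim_u f(u)/u>0$ of Theorem 3.1, and no boundedness of $(x_t)$ is required, since only the ``strongly summable $\Rightarrow$ statistically convergent'' direction is used.

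For the second stage I would simply apply Theorem 3.7(1): under $\lambda_n\leq\mu_n$ and $\liminf_n \lambda_n/\mu_n>0$ we have $S_{f_\mu}\subseteq S_{f_\lambda}$, so $(x_t)$ is $S_{f_\lambda}$-statistically convergent to $L$. If one prefers a self-contained argument, the mechanism can be inlined: with $I_n\subseteq J_n$ one has $\{t\in I_n:|x_t-L|\geq\xi\}\subseteq K_n(\xi)$, whence
\[
\frac{f(|\{t\in I_n:|x_t-L|\geq\xi\}|)}{f(\lambda_n)}\leq \frac{f(|K_n(\xi)|)}{f(\mu_n)}\cdot\frac{f(\mu_n)}{f(\lambda_n)},
\]
the first factor tending to $0$ by the first stage and the second factor being bounded.

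The step I expect to be the main obstacle is justifying that the second factor $f(\mu_n)/f(\lambda_n)$ stays bounded, i.e. that $\liminf_n \lambda_n/\mu_n>0$ implies $\liminf_n f(\lambda_n)/f(\mu_n)>0$; this is the same implication used in Theorem 3.7. Here I would fix a lower bound $\delta$ with $\lambda_n\geq\delta\mu_n$ eventually, and apply monotonicity together with the multiplicative property in the right direction, $f(\lambda_n)\geq f(\delta\mu_n)\geq c\,f(\delta)\,f(\mu_n)$, so that $f(\lambda_n)/f(\mu_n)\geq c\,f(\delta)>0$. The one point requiring care is ensuring $f(\delta)>0$, which follows from $\delta>0$ and the axiom $f(x)=0\iff x=0$; everything else is a direct comparison over the nested blocks $I_n\subseteq J_n$.
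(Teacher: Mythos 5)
Your proof is correct, and its engine is the same inequality chain the paper uses: iterated subadditivity, monotonicity, and the multiplicative property give $\sum_{t\in J_n}f(\vert x_t-L\vert)\geq f(\vert K_n(\xi)\vert\,\xi)\geq c\,f(\vert K_n(\xi)\vert)f(\xi)$. The difference is organizational: the paper shrinks the counting set to $I_n$ \emph{inside} the chain and then writes $\frac{1}{\mu_n}\sum_{t\in J_n}f(\vert x_t-L\vert)\geq\frac{\lambda_n}{\mu_n}\cdot\frac{1}{\lambda_n}f(\vert\{t\in I_n:\vert x_t-L\vert\geq\xi\}\vert)f(\xi)$, while you factor the statement as $w_{f_\mu}\subseteq S_{f_\mu}\subseteq S_{f_\lambda}$, the second inclusion being part (1) of the preceding comparison theorem (which you cite as 3.7(1); in the paper's numbering it is Theorem 3.8(1)). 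Your route buys two genuine improvements. First, you normalize by $f(\mu_n)$ and $f(\lambda_n)$, which is what the paper's own definitions of strong $f_\mu$-summability and of $S_{f_\lambda}$ demand; the paper's displayed proof divides by $\mu_n$ and $\lambda_n$ and controls only the ratio $\lambda_n/\mu_n$, so as written it does not literally reach the conclusion without the conversion $f(\lambda_n)\geq f(\delta\mu_n)\geq c f(\delta)f(\mu_n)$ that you make explicit. Second, that conversion is exactly the point the paper elides with ``by using the definition of modulus function,'' and your check that $f(\delta)>0$ closes it. One caveat applies to both proofs equally: the hypothesis $f(xy)\geq cf(x)f(y)$ does not appear in this theorem's statement and must be understood as the standing assumption carried over from Theorems 3.1 and 3.2; your proof, like the paper's, genuinely needs it in the step $f(\vert K_n(\xi)\vert\,\xi)\geq c f(\vert K_n(\xi)\vert)f(\xi)$ and again in bounding $f(\mu_n)/f(\lambda_n)$.
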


\begin{proof} For any $\xi>0$, we have,

\begin{eqnarray*}
\displaystyle\sum_{t \in J_n} f(\vert x_t -L \vert)&=& \displaystyle\sum_{t \in J_n, f(\vert x_t -L \vert ) \geq \xi} f(\vert x_t -L \vert) +  \displaystyle\sum_{t \in J_n, f(\vert x_t -L \vert ) < \xi} f(\vert x_t -L \vert)\\
&\geq& \displaystyle\sum_{t \in I_n, f(\vert x_t -L \vert ) \geq \xi} f(\vert x_t -L \vert) + \displaystyle\sum_{t \in I_n, f(\vert x_t -L \vert ) < \xi} f(\vert x_t -L \vert)\\
&\geq& \displaystyle\sum_{t \in I_n, f(\vert x_t -L \vert ) \geq \xi} f(\vert x_t -L \vert)\\
&\geq& f\left(\displaystyle\sum_{t \in J_n, f(\vert x_t -L \vert ) \geq \xi} \vert x_t -L \vert\right)\\
&\geq& f(\vert\{ t \in I_n: \vert x_t -L \vert \geq \xi \}\vert.\xi )\\
&\geq& c f(\vert\{t \in I_n: \vert x_t -L \vert \geq \xi \} \vert). f(\xi)
\end{eqnarray*}

and so

\begin{eqnarray*}
\frac{1}{\mu_n} \displaystyle\sum_{t \in J_n} f(\vert x_t-L \vert) &\geq& \frac{1}{\mu_n} f(\vert \{ t \in I_n: \vert x_t - L \vert \geq \xi \} \vert) f(\xi)\\
&\geq& \frac{\lambda_n}{\mu_n} \frac{1}{\lambda_n} f(\vert \{ t \in I_n: \vert x_t -L \vert \geq \xi \} \vert ) f(\xi)
\end{eqnarray*}

Since $\lim \displaystyle\inf_{n \rightarrow \infty} \frac{\lambda_n}{\mu_n} >0$ and by using the definition of modulus function, we get, $(x_t)$ is strongly $f_\lambda$-summable to $L$ implies $(x_t)$  is $S_{f_\lambda}$statistically convergent to $L$.\\

This completes the proof of the theorem.

\end{proof}

\end{document}